\newcommand{\ZZ}{\mathbb{Z}}
\newcommand{\kk}{\Bbbk}
\def\opn#1#2{\def#1{\operatorname{#2}}} % to make operators
\opn\conv{conv} \opn\mut{mut} \opn\GL{GL} \opn\cone{cone} \opn\ini{in} \opn\NF{NF} \opn\deg{deg}
\opn\NF{NF} \opn\sign{sign} \opn\mat{Mat} \opn\rank{rank} \opn\type{type} \opn\reg{reg} \opn\core{core}
\opn\indmat{im} \opn\Indeg{Indeg} \opn\star{star} \opn\link{link} \opn\Tor{Tor} \opn\MNF{MNF} \opn\Min{Min}
\newtheorem{thm}{Theorem}[section]
\newtheorem{lem}[thm]{Lemma}
\newtheorem{prop}[thm]{Proposition}
\theoremstyle{definition}
\theoremstyle{remark}
\newtheorem{rem}[thm]{Remark}
\begin{document}

\title{Inequalities of invariants on Stanley-Reisner rings of Cohen--Macaulay simplicial complexes}
\author{Akihiro Higashitani}
\author{Hiroju Kanno}
\author{Kazunori Matsuda}

\address[A. Higashitani]{Department of Pure and Applied Mathematics, Graduate School of Information Science and Technology, Osaka University, Suita, Osaka 565-0871, Japan}
\email{higashitani@ist.osaka-u.ac.jp}
\address[H. Kanno]{Department of Pure and Applied Mathematics, Graduate School of Information Science and Technology, Osaka University, Suita, Osaka 565-0871, Japan}
\email{u825139b@ecs.osaka-u.ac.jp}
\address[K. Matsuda]{Kitami Institute of Technology, Kitami, Hokkaido 090-8507, Japan}
\email{kaz-matsuda@mail.kitami-it.ac.jp}

\subjclass[2010]{
Primary 13F55; 
Secondary 13D02, 13D40, 05C70, 05E40
%13D02, 13D40, 05C70, 05E40? 
%13D02 : Syzygies, resolutions, complexes 
%13D40 : Hilbert-Samuel and Hilbert-Kunz functions; Poincare series
%13F55 : Stanley-Reisner face rings; simplicial complexes 
%05C70 : Factrization, matching, partitioning, covering and packing
%05E40 : Combinatorial aspects of simplicial complexes 
} 
\keywords{Stanley--Reisner rings, Cohen--Macaulay, edge ideals, Castelnuovo--Mumford regularity, Cohen--Macaulay type.}

\maketitle

%%%---abstract---%%%
\begin{abstract} 
The goal of the present paper is the study of some algebraic invariants of Stanley--Reisner rings of Cohen--Macaulay simplicial complexes of dimension $d - 1$. 
We prove that the inequality $d \leq \reg(\Delta) \cdot \type(\Delta)$ holds for any $(d-1)$-dimensional Cohen--Macaulay simplicial complex $\Delta$ satisfying $\Delta=\core(\Delta)$, 
where $\reg(\Delta)$ (resp. $\type(\Delta)$) denotes the Castelnuovo--Mumford regularity (resp. Cohen--Macaulay type) of the Stanley--Reisner ring $\kk[\Delta]$. 
Moreover, for any given integers $d,r,t$ satisfying $r,t \geq 2$ and $r \leq d \leq rt$, 
we construct a Cohen--Macaulay simplicial complex $\Delta(G)$ as an independent complex of a graph $G$ such that 
$\dim(\Delta(G))=d-1$, $\reg(\Delta(G))=r$ and $\type(\Delta(G))=t$. 
\end{abstract}

\section{Introduction}

The theory of monomial ideals is one of the most well-studied topics in the area of combinatorial commutative algebra. 
Since any monomial ideal can be deduced into a {\em squarefree} monomial ideal by polarization (see \cite[Section 1.6]{HerzogHibi}), 
Stanley--Reisner rings and Stanley--Reisner ideals play a crucial role. 
The goal of the present paper is the investigation of some algebraic invariants on Stanley--Reisner rings (or ideals). 

For the terminologies used throughout the present paper, see Section~\ref{sec:notation}. 
One typical class of Stanley--Reisner ideals is the edge ideals $I(G)$ of graphs $G$, which coincide with the Stanley--Reisner ideals of the independent complexes $\Delta(G)$ of $G$. 
Recently, the invariants on the Stanley--Reisner rings $\kk[\Delta(G)]$ of $\Delta(G)$ are intensively investigated (\cite{CN, CRT, GV, HHKO, HKKMV, HKM, HMV, KM}, and so on). 
What we would like to do in the present paper is to generalize the previous studies on the edge ideals and initiate a new study for more general squarefree monomial ideals than edge ideals.

For the investigation of the invariants on Stanley--Reisner rings of simplicial complexes $\Delta$, 
we focus on the Castelnuovo--Mumford regularity $\reg(\Delta)$ and the Cohen--Macaulay type $\type(\Delta)$. 
The first main result of the present paper is the following: 
\begin{thm}\label{main1}
Let $\Delta$ be a Cohen--Macaulay simplicial complex of dimension $d-1$ satisfying $\Delta=\core(\Delta)$. 
Then we have \begin{align}\label{ineq}d \leq \reg (\Delta) \cdot \type(\Delta).\end{align}
\end{thm}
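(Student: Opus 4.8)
The plan is to recast the inequality \eqref{ineq} as a bound on the size of a facet. Since $\kk[\Delta]$ is Cohen--Macaulay of Krull dimension $d$ and $\Delta$ is pure, every facet $F$ has exactly $d$ vertices, so \eqref{ineq} is equivalent to the assertion that $|F|\le \reg(\Delta)\cdot\type(\Delta)$ for every facet $F$. I would first record two elementary ingredients. On the one hand, the top degree of a minimal generator of the Stanley--Reisner ideal $I_\Delta$ is at most $\reg(\kk[\Delta])+1=\reg(\Delta)+1$; combinatorially, every minimal non-face of $\Delta$ has at most $\reg(\Delta)+1$ vertices. On the other hand, no non-face can be contained in the face $F$, so any minimal non-face meets $F$ in at most $\reg(\Delta)$ vertices. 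The hypothesis $\Delta=\core(\Delta)$ enters here in an essential way: it guarantees that $\Delta$ has no cone point, equivalently that every variable occurs in some minimal generator of $I_\Delta$, so that every vertex of $F$ lies in at least one minimal non-face. This is precisely the point that fails under coning, which increases $d$ while leaving $\reg$ and $\type$ unchanged.

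The heart of the argument is then to select a family of minimal non-faces covering the $d$ vertices of $F$ while using at most $\type(\Delta)$ of them; together with the previous paragraph this yields $d\le \reg(\Delta)\cdot\type(\Delta)$. The difficult part is to control the number of non-faces needed by the Cohen--Macaulay type, and I expect a naive greedy cover to be insufficient: when $\type(\Delta)=1$ one is forced into the Gorenstein regime, where $\reg(\Delta)=d$ and the bound is saturated for cohomological reasons (the $a$-invariant vanishes) rather than by a small cover. I would therefore pass to an Artinian reduction $A=\kk[\Delta]/\theta\kk[\Delta]$ by a linear system of parameters, so that $\dim_\kk A_i=h_i$, $\reg(\Delta)=\max\{i:A_i\neq 0\}$ and $\type(\Delta)=\dim_\kk\operatorname{Soc}(A)$, and reformulate the target as the inequality $-a(\kk[\Delta])=d-\reg(\Delta)\le \reg(\Delta)\,(\type(\Delta)-1)$ on the $a$-invariant. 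The aim is to produce, from each minimal generator of $\operatorname{Soc}(A)$ (equivalently, from each of the $\type(\Delta)$ minimal generators of the canonical module $\omega_{\kk[\Delta]}$), one minimal non-face meeting $F$, in such a way that the socle degrees account for the $d$ vertices of $F$ in blocks of size at most $\reg(\Delta)$.

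To carry out this selection I would use Hochster's formula in two ways: for the graded Betti numbers, which realises a minimal non-face through a nonvanishing group $\tilde H_{|W|-2}(\Delta_W;\kk)$ and bounds its size by $\reg(\Delta)+1$; and for the local cohomology, which expresses the top-degree part of $H^d_{\mathfrak m}(\kk[\Delta])$, hence the relevant socle generators, in terms of reduced homology of links of faces of $F$. The main obstacle, and the step I expect to require the most care, is matching these two pieces of combinatorial data: converting the $\type(\Delta)$ socle generators into a genuine cover of the vertices of $F$ by minimal non-faces of controlled size. As a fallback I would attempt an induction on $\dim\Delta$ through the link of a well-chosen vertex $v$, using that $\link_\Delta(v)$ is again Cohen--Macaulay of dimension $d-2$ together with the monotonicity statements $\reg(\link_\Delta(v))\le \reg(\Delta)$ and $\type(\link_\Delta(v))\le \type(\Delta)$; here the difficulties are that $\link_\Delta(v)$ need not satisfy $\link_\Delta(v)=\core(\link_\Delta(v))$ and that the product $\reg\cdot\type$ need not strictly decrease, so that $v$ must be chosen to force a strict drop and thereby close the induction.
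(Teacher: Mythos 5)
Your proposal has a genuine gap at its central step, and the step is not merely unproven but false as formulated. You want to cover the $d$ vertices of a facet $F$ by at most $\type(\Delta)$ minimal non-faces, each meeting $F$ in at most $\reg(\Delta)$ vertices. Consider the boundary of the $d$-dimensional cross-polytope, i.e.\ $\Delta(G)$ for $G$ a disjoint union of $d$ edges: here $\Delta=\core(\Delta)$, $\Delta$ is Gorenstein with $\type(\Delta)=1$ and $\reg(\Delta)=d$, every minimal non-face is an edge of $G$ and meets a facet $F$ in exactly one vertex, so any cover of the $d$ vertices of $F$ by minimal non-faces requires $d$ of them, not $\type(\Delta)=1$. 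You anticipate trouble in the Gorenstein regime and propose to repair the argument by producing, from each of the $\type(\Delta)$ socle generators of an Artinian reduction, a ``block'' of at most $\reg(\Delta)$ vertices of $F$; but you give no mechanism for this, and the example shows the blocks cannot be realized by minimal non-faces at all (no minimal non-face meets $F$ in more than one vertex, yet a single block of size $d$ would be needed). This matching of socle generators to vertices of $F$ is exactly the content of the theorem, so the proposal leaves the essential idea unsupplied. The fallback induction on links is likewise only a sketch, with the two obstructions you yourself name (loss of the core condition, no forced strict drop in $\reg\cdot\type$) left unresolved.

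For contrast, the paper works with faces rather than non-faces: it sets $X(\Delta)=\{S\in\Delta : \widetilde{H}_{\dim(\link(S))}(\link(S))\neq 0\}$ and lets $M(\Delta)$ be its minimal elements, which index minimal generators of the canonical module, so $|M(\Delta)|\leq\type(\Delta)$; the quantity controlled by the regularity is the \emph{codimension} $d-|S|\leq\reg(\Delta)$ (via $-a(\Delta)\leq|S|$), not the size of $S$. The stars of the elements of $M(\Delta)$ cover $\Delta$, and the hypotheses $\Delta=\core(\Delta)$ and strong connectedness (from Cohen--Macaulayness) produce a chain of facets from $H$ to a disjoint facet $H'$; an inclusion--exclusion induction along the corresponding chain $S_1,\ldots,S_q$ in $M(\Delta)$ gives $d=d-|\bigcap_i S_i|\leq\sum_i(d-|S_i|)\leq\reg(\Delta)\cdot\type(\Delta)$. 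In the cross-polytope example this collapses correctly: $M(\Delta)=\{\emptyset\}$ and $d-|\emptyset|=d=\reg(\Delta)$. If you want to salvage your outline, the local-cohomology half of your Hochster-formula plan is pointing at the right objects (the faces in $X(\Delta)$), but the minimal non-faces and the Betti-number half should be discarded.
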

We can also see in the following proposition that a stronger inequality holds for the simplicial complexes $\Delta$ under some stronger assumption on $\kk[\Delta]$: 
\begin{prop}\label{prop:main}
Let $\Delta$ be a Cohen--Macaulay simplicial complex of dimension $d-1$ satisfying $\Delta=\core(\Delta)$. 
Assume that $\Delta$ satisfies one of the following: 
\begin{itemize}
\item[(1)] $\kk[\Delta]$ has a $1$-linear resolution; or 
\item[(2)] $a(\Delta)=0$, where $a(\Delta)$ denotes the $a$-invariant of $\kk[\Delta]$. 
\end{itemize}
Then the inequality \begin{align}\label{ineq'} d \leq \reg(\Delta)+\type(\Delta)-1 \end{align} holds. 
%In particular, the inequality \eqref{ineq} holds if $\reg(\Delta)=1$ or $\type(\Delta)=1$ for more general simplicial complexes than the ones in Theorem~\ref{main1}. 
\end{prop}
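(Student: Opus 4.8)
The plan is to show that each of the two hypotheses forces $\reg(\Delta)$ to take an extremal value, after which the additive inequality \eqref{ineq'} follows either trivially or directly from Theorem~\ref{main1}. The one ingredient common to both cases is the standard fact that, because $\kk[\Delta]$ is Cohen--Macaulay of Krull dimension $d$, its Castelnuovo--Mumford regularity and $a$-invariant are linked by
\begin{align*}
\reg(\Delta) = a(\Delta) + d.
\end{align*}
This comes from the local-cohomology description $\reg(\kk[\Delta]) = \max_i\{a_i(\kk[\Delta]) + i\}$ together with the Cohen--Macaulay vanishing $H^i_{\mathfrak m}(\kk[\Delta]) = 0$ for $i \neq d$, so that only the top piece $a_d(\kk[\Delta]) = a(\Delta)$ survives in the maximum.

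Assuming (2), that is $a(\Delta) = 0$, the displayed identity gives $\reg(\Delta) = d$. Since the Cohen--Macaulay type is the minimal number of generators of the nonzero canonical module of $\kk[\Delta]$, we always have $\type(\Delta) \geq 1$. Hence
\begin{align*}
\reg(\Delta) + \type(\Delta) - 1 = d + \type(\Delta) - 1 \geq d,
\end{align*}
which is exactly \eqref{ineq'}. This case therefore needs nothing beyond the regularity--$a$-invariant relation.

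Assuming (1), the existence of a $1$-linear resolution of $\kk[\Delta]$ forces $\reg(\Delta) = 1$; equivalently $I_\Delta$ is generated in degree $2$ and has a linear resolution. In this regime the multiplicative bound of Theorem~\ref{main1} and the additive bound \eqref{ineq'} coincide, and applying Theorem~\ref{main1} yields
\begin{align*}
d \leq \reg(\Delta)\cdot \type(\Delta) = \type(\Delta) = \reg(\Delta) + \type(\Delta) - 1,
\end{align*}
as required.

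The argument is short precisely because each hypothesis collapses \eqref{ineq'} onto an inequality already in hand, so the point to verify carefully is the reduction itself: that a $1$-linear resolution pins the regularity at $1$, and that $a(\Delta) = 0$ pins it at $d$. If one prefers a proof of case (1) that does not invoke Theorem~\ref{main1}, I would instead read off the type directly from the $2$-linear resolution: comparing the numerator of the Hilbert series of $\kk[\Delta]$ with the alternating sum of its graded Betti numbers shows that the last Betti number equals $h_1 = n - d$, whence $\type(\Delta) = n - d$, so that $d \leq \type(\Delta)$ becomes $n \geq 2d$. I expect the Theorem~\ref{main1}-based route to be the cleaner one to record, and the extraction of $n \geq 2d$ from the fact that $\Delta = \core(\Delta)$ is a pure (well-covered) independence complex with no cone point to be the only genuinely non-formal step in the alternative.
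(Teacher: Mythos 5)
Your proof is correct, and case (2) is essentially identical to the paper's: $a(\Delta)=0$ forces $\reg(\Delta)=d$ via $\reg(\Delta)=a(\Delta)+\dim(\kk[\Delta])$, and $\type(\Delta)\geq 1$ finishes it. For case (1), however, you take a genuinely different and in fact shorter route than the paper. You observe that a $1$-linear resolution pins $\reg(\Delta)=1$, at which point the multiplicative bound of Theorem~\ref{main1} (whose hypotheses --- Cohen--Macaulay and $\Delta=\core(\Delta)$ --- are exactly those of the proposition, and whose proof is independent, so there is no circularity) collapses onto the additive bound: $d\leq \reg(\Delta)\cdot\type(\Delta)=\type(\Delta)=\reg(\Delta)+\type(\Delta)-1$. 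The paper instead argues directly: it notes that a $1$-linear resolution forces $\Indeg(I_\Delta)=2$, so $\Delta=\Delta(G)$ for a graph $G$ with no isolated vertex; it then computes $\type(\Delta)=\binom{c+p-1}{p}=c=n-d$ from the Eisenbud--Goto description of the Betti numbers of a Cohen--Macaulay algebra with linear resolution, and invokes the Gitler--Valencia inequality $n-d\geq d$ (Theorem~\ref{dim_codim}) to conclude. Your primary route buys brevity and avoids both the reduction to edge ideals and the external input from Gitler--Valencia; the paper's route buys the extra information that $\type(\Delta)$ equals the codimension exactly. Your sketched alternative for case (1) (extracting $\type=n-d$ from the linear resolution and then needing $n\geq 2d$) is precisely the paper's argument, and you correctly identify $n\geq 2d$ as the step that requires a citation rather than a formal manipulation.
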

Note that the inequality \eqref{ineq'} implies \eqref{ineq} since $\reg(\Delta) \geq 1$ and $\type(\Delta) \geq 1$. 
In Section~\ref{sec:proofs1}, we give a proof of Theorem~\ref{main1} and Proposition~\ref{prop:main}. 

\medskip

If an inequality appears among the invariants, then it is quite natural to think of whether that is best possible or not. 
The following theorem, which is the second main result, shows that the inequality \eqref{ineq} is best possible: %for Cohen--Macaulay independent complexes: 
\begin{thm}\label{main2}
Let $d,r,t$ be integers with $r,t \geq 2$ and assume that the inequalities $r \leq d \leq rt$ hold. 
Then there exists a graph $G$ having no isolated vertex such that $\Delta(G)$ is Cohen--Macaulay and 
$$\dim (\Delta(G))=d-1, \;\; \reg (\Delta(G)) = r \text{ and } \type (\Delta(G))=t.$$
\end{thm}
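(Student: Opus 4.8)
The plan is to realize the triple $(\dim+1,\reg,\type)=(d,r,t)$ by an explicit graph assembled from two families of Cohen--Macaulay gadgets, exploiting the behaviour of the invariants under disjoint union of graphs. The key structural fact is that $\kk[\Delta(G_1\sqcup G_2)]\cong\kk[\Delta(G_1)]\otimes_\kk\kk[\Delta(G_2)]$, so that under a disjoint union Krull dimension adds, $\reg$ adds, $\type$ multiplies, and Cohen--Macaulayness is preserved (and no isolated vertex is created). A first, constraining observation is that one cannot reach the large values of $d$ by disjoint unions of low-dimensional pieces: if $G=G_1\sqcup\cdots\sqcup G_m$, then Theorem~\ref{main1} applied to each factor gives $d=\sum_i\dim_i+ \text{const}\le\sum_i \reg(\Delta(G_i))\cdot\type(\Delta(G_i))$, and under the constraints $\sum_i\reg_i=r$, $\prod_i\type_i=t$ this bound is maximised (equal to $rt$) only when a \emph{single} factor carries all of the regularity and all of the type. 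Hence the dimensions near $rt$ must be produced by one connected block, the disjoint union serving only to fine-tune. Accordingly I split $r\le d\le rt$ into $[r,\,r+t-1]$ and $[r+t-1,\,rt]$ and treat them by different gadgets that agree at the endpoint $d=r+t-1$.

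For the lower range I use regularity-$1$ blocks. For each $k$ with $1\le k\le t$ I take a graph whose complement is chordal, chosen so that $\Delta$ is a shellable $(k-1)$-ball on $t+k$ vertices (for instance the clique complex of the $(k-1)$-st power of the path $P_{t+k}$); by the theorem of Fr\"oberg $\kk[\Delta]$ has a $1$-linear resolution, so $\reg=1$, it is Cohen--Macaulay with $\dim=k-1$, and its Artinian reduction is concentrated in degrees $0,1$, whence $\type=\mathrm{codim}=(t+k)-k=t$. Taking the disjoint union of such a block with $r-1$ edges yields $\reg=r$, $\type=t$ and $\dim+1=k+(r-1)$, which runs through exactly $[r,\,r+t-1]$ as $k$ runs through $[1,t]$. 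The absence of universal vertices in the chosen complement, hence of isolated vertices in $G$, gives $\Delta(G)=\core(\Delta(G))$.

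For the upper range I use fully whiskered graphs. For a graph $H$ let $H^{\ast}$ be obtained by attaching a pendant vertex to each vertex of $H$; then $\Delta(H^{\ast})$ is vertex-decomposable, hence Cohen--Macaulay, with Krull dimension $|V(H)|$ and no isolated vertex. The heart of the computation is to identify the $h$-polynomial of $\kk[\Delta(H^{\ast})]$ with the independence polynomial $I(H;z)=\sum_A z^{|A|}$ (sum over independent sets $A$ of $H$); this follows from the bijection $A\mapsto A\cup\{v'\colon v\notin A\}$ between independent sets of $H$ and facets of $\Delta(H^{\ast})$, together with a shelling in which the restriction of the facet indexed by $A$ is $A$ itself. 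Since $\kk[\Delta(H^{\ast})]$ is Cohen--Macaulay, its regularity equals the degree of its $h$-polynomial, namely $\deg I(H;z)=\alpha(H)$. I then choose $H$ with $\alpha(H)=r$ having exactly $t$ maximum independent sets whose union is all of $V(H)$, and with $|V(H)|=d$; the covering condition forces $r\le d\le rt$, and both extremes are realisable --- $d=r+t-1$ by a ``book'' ($r-1$ common vertices independent in $H$ together with a disjoint $t$-clique) and $d=rt$ by the complete $t$-partite graph with all parts of size $r$ --- with every intermediate value obtained by adjusting the overlaps of the maximum independent sets.

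The main obstacle is the exact determination of $\type$ for the whiskered gadget. From the $h$-polynomial one gets $\reg$ for free, and $\type\ge h_{\alpha(H)}=(\text{number of maximum independent sets of }H)=t$; what remains is the reverse inequality, i.e. that $\kk[\Delta(H^{\ast})]$ is \emph{level} (its socle, equivalently the generators of the canonical module, being concentrated in top degree) for the specific graphs $H$ used. I expect this to be the technical core of the argument, to be settled either by a direct analysis of the socle of an Artinian reduction or by exhibiting a sufficiently structured shelling of $\Delta(H^{\ast})$ for the complete multipartite and book graphs above. Once level-ness is in hand one has $\type=t$, and the two families together cover the whole range $r\le d\le rt$, matching the endpoint $d=r+t-1$ from both sides.
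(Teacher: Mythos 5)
Your construction is genuinely different from the paper's (which takes $W(K_{r,\ldots,r,q})$ to hit dimension $d-1$, regularity $r$ and type $p+1$, and then raises the type one unit at a time by $S$-suspensions, i.e.\ ridge sums with a new simplex, using a result of Dao--Schweig), and your lower-range argument via disjoint unions with a $1$-linear block is essentially complete: Fr\"oberg gives the linear resolution, shellability of the path-of-simplices gives Cohen--Macaulayness and the $h$-vector $(1,t)$, the socle computation gives $\type=t$, and tensoring with $r-1$ edges adds $r-1$ to both the Krull dimension and the regularity while preserving the type. The problem is the upper range $[r+t-1,rt]$, where you yourself flag the gap: from the top coefficient of the independence polynomial you only get $\type(\Delta(W(H)))\geq t$, and you still have to prove that $\kk[\Delta(W(H))]$ is \emph{level} for the graphs $H$ you use. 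This is not a routine verification, and it is exactly the point where the paper invokes a nontrivial external input, namely the formula of Crupi--Rinaldo--Terai (\cite[Corollary 4.4]{CRT}) expressing the type of such a Cohen--Macaulay edge ideal as the number of minimal vertex covers of an auxiliary graph $O_{[d]}(G)|_X$; applied to $W(K_{r_1,\ldots,r_t})$ this yields $\type=t$ because the minimal vertex covers of a complete multipartite graph are precisely the complements of its parts. Until you supply such an argument (or a sufficiently structured shelling establishing levelness), the equality $\type=t$ is unproven on the entire upper range, so the proposal does not yet prove the theorem.

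A second, smaller defect: for intermediate $d$ in the upper range you only say the value is ``obtained by adjusting the overlaps of the maximum independent sets,'' without exhibiting the graphs. The natural explicit choice is $H=K_{r,r_2,\ldots,r_t}$ with $1\leq r_i\leq r$, which realizes every $d=r+\sum_{i\geq 2}r_i\in[r+t-1,rt]$ and is precisely the family the paper analyzes; you should commit to a concrete family, since the levelness verification above has to be carried out for each of its members.
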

We note that an isolated vertex of $G$ corresponds to a vertex in $V(G) \setminus \core(V(G))$, so the condition $\Delta=\core(\Delta)$ is equivalent to what $G$ has no isolated vertex. 
Moreover, the inequality $r \leq d$ naturally holds (see Remark~\ref{rem:a-inv}). 

In Section~\ref{sec:proofs2}, we give a proof of Theorem~\ref{main2}. 

\medskip

\subsection*{Acknowledgements} 
The authors would like to be grateful to Satoshi Murai for the comments on the first version of the present paper. 
Thanks to his comments, the authors could improve Theorem~\ref{main1} of the first version and make the present paper more readable. 

The first named author is partially supported by JSPS Grant-in-Aid for Scientific Research (C) 20K03513. 
The third named author is partially supported by JSPS Grant-in-Aid for Scientific Research (C) 20K03550.

\bigskip

%%%%%%%%%%%%%%%%%%%%%%%%%%%%%%%%%%%%%%%%%%%%%%%%%%%%%%%%%%%%%%%%%%%%%%%%%%%%%%%%%%%%%%%%%%%%%%%%%%%%%%%%%%%%%%%%
%%%%%%%%%%%%%%%%%%%%%%%%%%%%%%%%%%%%%%%%%%%%%%%%%%%%%%%%%%%%%%%%%%%%%%%%%%%%%%%%%%%%%%%%%%%%%%%%%%%%%%%%%%%%%%%%
%%%%%%%%%%%%%%%%%%%%%%%%%%%%%%%%%%%%%%%%%%%%%%%%%%%%%%%%%%%%%%%%%%%%%%%%%%%%%%%%%%%%%%%%%%%%%%%%%%%%%%%%%%%%%%%%
%%%%%%%%%%%%%%%%%%%%%%%%%%%%%%%%%%%%%%%%%%%%%%%%%%%%%%%%%%%%%%%%%%%%%%%%%%%%%%%%%%%%%%%%%%%%%%%%%%%%%%%%%%%%%%%%

\section{Notation and terminologies}\label{sec:notation}

In this section, we collect the notation used in the present paper. 
Please consult, e.g., \cite[Section 5]{BH} or \cite{HerzogHibi}, for the detailed information. 

Let $V=\{v_1,\ldots,v_n\}$ be a finite set. 
We call a family $\Delta$ of subsets of $V$ an {\em (abstract) simplicial complex} on the vertex set $V$ 
if it satisfies that \begin{itemize}
\item[(i)] $\{v_i\} \in \Delta$ for any $v_i \in V$; and 
\item[(ii)] if $F \in \Delta$ and $F' \subset F$, then $F' \in \Delta$. 
\end{itemize}
Note that $\emptyset$ belongs to $\Delta$ for any simplicial complex $\Delta$. 
Let $\dim (\Delta)$ denote the dimension of $\Delta$, i.e., $\dim (\Delta)=\max\{\dim (F) : F \in \Delta\}$. 

Throughout this section, let $\Delta$ be a simplicial complex of dimension $d-1$ on the vertex $V=\{v_1,\ldots,v_n\}$. 

\subsection{Terminologies on simplicial complexes}

Given $F \in \Delta$, let 
\begin{align*}
&\Delta \setminus F =\{G \in \Delta : G \subset V \setminus F\}, \\
&\star(F)=\{G \in \Delta : F \cup G \in \Delta\}, \text{ and }\\
&\link(F)=\star(F) \setminus F. 
\end{align*}
When $F=\{v\}$, we use the notation $\Delta \setminus v$ instead of $\Delta \setminus \{v\}$, and so on. 

Given a new vertex $v$, let $\Delta * v$ be a new simplicial complex on the vertex set $V \sqcup \{v\}$ consisting of $\{F, F \cup \{v\} : F \in \Delta\}$. 
We call $\Delta * v$ a {\em cone} of $\Delta$. Note that $\star_{\Delta * v}(v)=\Delta * v$. 

For $W \subset V$, let $\Delta_W=\{F \in \Delta : F \subset W\}$. 
Let $\core(V)=\{ v \in V : \star(v) \neq \Delta\}$ and let $\core(\Delta)=\Delta_{\core(V)}$.  
In the sequel, we often assume that $\Delta=\core(\Delta)$, which is equivalent to $\core(V)=V$.

We recall the definition of vertex decomposable simplicial complexes. 
A simplicial complex $\Delta$ on the vertex set $V$ is called {\em vertex decomposable} (\cite{BW}) if $\Delta$ is a simplex 
or if there is a vertex $v \in V$, called a {\em shedding vertex}, such that 
\begin{itemize}
\item $\Delta \setminus v$ and $\link(v)$ are vertex decomposable; and 
\item no face of $\link(v)$ is a facet of $\Delta \setminus v$. 
\end{itemize}
It is well known that for a pure simplicial complex $\Delta$, the vertex decomposability of $\Delta$ implies Cohen--Macaulayness of $\Delta$. 
(Here, we say that $\Delta$ is {\em pure} if all facets of $\Delta$ have the same dimension.)

Let $\Delta_1,\Delta_2$ be two simplicial complexes on disjoint vertex sets of dimension $d-1$. 
Let $R_1 \in \Delta_1$ and $R_2 \in \Delta_2$ be ridges, which are faces of dimension $d-2$, respectively.
We define a new simplicial complex, %$\Delta_1 \sqcup_\text{ridge} \Delta_2$
called the {\em ridge sum of $\Delta_1$ and $\Delta_2$}, 
by identifying $R_1$ and $R_2$ in the union $\Delta_1 \cup \Delta_2$ of simplicial complexes. 
The terminology of ridge sums was introduced in \cite{MM} and similar notions appear in \cite{DS}. 

We say that a $(d-1)$-dimensional pure simplicial complex $\Delta$ is {\em strongly connected} if for any two facets $F,F'$ of $\Delta$, 
there exists a sequence of facets $F_0,F_1,\ldots,F_p$ of $\Delta$ such that $F_0=F$, $F_p=F'$ and $\dim (F_{i-1} \cap F_i) = d-2$ for each $1 \leq i \leq p$.

\subsection{Terminologies on Stanley--Reisner rings}
We refer the reader to \cite{BH} for the introduction to the theory of Stanley--Reisner rings. 

Let $S=\kk[x_1,\ldots,x_n]$ be the polynomial ring with $n(=|V|)$ variables over a field $\kk$. 
We denote by $I_\Delta$ (resp. $\kk[\Delta]$) the {\em Stanley--Reisner ideal} (resp. {\em Stanley--Reisner ring}) of $\Delta$ 
over a field $\kk$, i.e., 
\begin{align*}
I_\Delta=\left(\prod_{j=1}^\ell x_{i_j} : \{v_{i_1},\ldots,v_{i_\ell}\} \not\in \Delta \right) \subset S \text{ and }
\kk[\Delta]=S/I_\Delta. 
\end{align*}

The invariants of $\Delta$ or $\kk[\Delta]$ or $I_\Delta$ discussed in the present paper are listed below: 
\begin{itemize}
\item Let $\dim(\kk[\Delta])$ denote the Krull dimension of $\kk[\Delta]$. Then $\dim(\kk[\Delta])=\dim(\Delta)+1$. 
\item We say that $\Delta$ is Cohen--Macaulay over $\kk$ if $\kk[\Delta]$ is Cohen--Macaulay. 
In the sequel, we often omit ``over $\kk$''. %if we talk on the topic which is independent of $\kk$. 
By Reisner's criterion (\cite[Corollary 5.3.9]{BH}), $\Delta$ is Cohen--Macaulay if and only if 
$\widetilde{H}_i(\link(F))=0$ for all $F \in \Delta$ and $i < \dim(\link(F))$, 
where $\widetilde{H}_i(\Gamma)$ denotes the $i$-th reduced simplicial homology of a simplicial complex $\Gamma$ with values in $\kk$. 
\item Let $\omega_\Delta$ denote the canonical module of $\kk[\Delta]$. 
\item On the Tor modules, we use the notation $\Tor_i^S(\Delta)$ instead of $\Tor_i^S(\kk,\kk[\Delta])$. 
\item Let $\beta_{ij}(\Delta)=\dim_\kk (\Tor_i^S(\Delta)_{i+j})$ for $i,j \in \ZZ$, where $\dim_\kk$ stands for the dimension as a $\kk$-vector space, 
and let $\beta_i(\Delta)=\sum_{j \in \ZZ}\beta_{ij}(\Delta)$. We call $\beta_i(\Delta)$ the {\em $i$-th graded Betti number} of $\kk[\Delta]$. 
Note that by Auslander--Buchsbaum formula (see \cite[Theorem 1.3.3]{BH}), we see that $\Delta$ is Cohen--Macaulay if and only if the projective dimension of $\kk[\Delta]$ is $n-d$, 
which is equivalent to $\beta_i(\Delta)=0$ for any $i > n-d$ and $\beta_{n-d}(\Delta) \neq 0$ (see \cite[Corollary 1.3.2]{BH}). 
\item Let $\reg(\kk[\Delta])$ denote the {\em Castelnuovo--Mumford regularity} of $\kk[\Delta]$, i.e., 
$\reg(\kk[\Delta])=\max\{j-i : \beta_{ij}(\Delta) \neq 0\}$. We use the notation $\reg(\Delta)$ instead of $\reg(\kk[\Delta])$. 
\item Assume that $\Delta$ is Cohen--Macaulay over $\kk$. 
Let $\type(\kk[\Delta])$ denote the {\em Cohen--Macaulay type} of $\kk[\Delta]$. Namely, $\type(\kk[\Delta])=\beta_c(\Delta)$, where $c=n-\dim(\kk[\Delta])$. 
Similarly to the above, we use the notation $\type(\Delta)$ instead of $\type(\kk[\Delta])$. 
\item Moreover, when $\Delta$ is Cohen--Macaulay over $\kk$, let $a(\kk[\Delta])=a(\Delta)$ denote the {\em $a$-invariant} (see \cite{GW}) of $\kk[\Delta]$. 
%Namely, we have $a(\kk[\Delta])=\max\{ j : (H_m^d(\kk[\Delta])_j \neq 0\}$, where $H_m^d(\kk[\Delta])$ denotes the $d$th local cohomology of $\kk[\Delta]$. 
Namely, we have $a(\Delta)=-\min\{ j : (\omega_\Delta)_j \neq 0\}$. 
It is well known that $$\reg(\Delta)=a(\Delta)+\dim(\kk[\Delta]).$$ 
\item Let $\Indeg(I_\Delta)$ denote the maximal degree of the minimal system of generators of $I_\Delta$. 
Let $$\MNF(\Delta)=\{G \subset V : G \not\in \Delta, \; G' \in \Delta \text{ for any }G' \subsetneq G\}.$$ 
Namely, $\MNF(\Delta)$ is the set of {\em minimal non-faces} of $\Delta$. Note that $\Indeg(I_\Delta)=\max\{|G| : G \in \MNF(\Delta)\}$. 
\item We say that $\kk[\Delta]$ has a {\em $p$-linear resolution} if $\beta_{ij}(\Delta) = 0$ unless $j-i=p$. 
In particular, $\reg(\Delta)=p$ in this case. 
Note that if $\kk[\Delta]$ has a $p$-linear resolution, then $\Indeg(I_\Delta)=p+1$, but the converse is not true in general. 
\end{itemize}
\begin{rem}\label{rem:a-inv}
It is known that $a(\Delta) = \reg(\Delta) - \dim(\kk[\Delta]) \leq 0$ (see \cite[Corollary B.4.1]{V}). 
Thus, the inequality $\reg(\Delta) \leq \dim (\Delta)+1$ always holds for any Cohen--Macaulay simplicial complex $\Delta$. 
\end{rem}
\begin{rem}\label{strongly_connected}
Any Cohen--Macaulay simplicial complex $\Delta$ is always strongly connected. 
In fact, if a $(d-1)$-dimensional simplicial complex $\Delta$ is not strongly connected, then there should exist two facets $F,F'$ such that 
$\dim (F \cap F') < d-2$ and $\link(F \cap F')$ is disconnected. When $\Delta$ is pure, we have $\dim (\link(F \cap F'))=d-2-\dim (F \cap F') > 0$. 
Hence, $\link (F \cap F')$ is not Cohen--Macaulay by Reisner's criterion (see also \cite[Exercise 5.1.26]{BH}). 
On the other hand, $\link(G)$ must be Cohen--Macaulay for any $G \in \Delta$ if $\Delta$ is Cohen--Macaulay. 
\end{rem}

\subsection{Terminologies on graphs}

Throughout the present paper, we only treat finite simple graphs (i.e., a finite graph with no loops and no multiple edges), and we omit ``finite simple''. 

Let $G$ be a graph on the vertex set $V(G)$ with the edge set $E(G)$. 
We call a subset $W$ of $V(G)$ an {\em independent set} if $\{v,w\} \not\in E(G)$ for any $v,w \in W$. 
Then we can associate an abstract simplicial complex $\Delta(G)$ on $V(G)$ as follows: 
$$\Delta(G)=\{ W \subset V(G) : W \text{ is an independent set of }G\},$$
which is called an {\em independent complex} of $G$. 

\begin{itemize}
\item For a vertex $v \in V(G)$, let $N_G(v)=\{w \in V(G) : \{v,w\} \in E(G)\}$ and let $N_G[v]=N_G(v) \cup \{v\}$. 
%\item For a subset $S \subset V(G)$, we denote by $G \setminus S$ the graph on the vertex set $V(G) \setminus S$ 
%with its edge set $\{\{v,w\} : v,w \in V(G) \setminus S\} \cap E(G)$. 
\item For a subset $S \subset V(G)$, we denote by $G|_S$ the subgraph of $G$ on the vertex set $S$ with the edge set $\{\{v,w\} : v,w \in S\} \cap E(G)$. 
We use the notation $G \setminus S$ instead of $G|_{V(G) \setminus S}$. 
\item Let $M \subset E(G)$ be a subset of the edge set. We say that $M$ is an {\em induced matching} of $G$ if $M$ satisfies that 
\begin{itemize}
\item $e \cap e' = \emptyset$ for any $e,e' \in M$ with $e \neq e'$; and 
\item there is no $e'' \in E(G)$ such that $e \cap e'' \neq \emptyset$ and $e' \cap e'' \neq \emptyset$. 
\end{itemize}
Moreover, let $\indmat(G)$ denote the maximal cardinality among induced matchings of $G$, called the {\em induced matching number} of $G$. 
\item For a subset $C \subset V(G)$, we say that $C$ is a {\em vertex cover} of $G$ if $C \cap e \neq \emptyset$ for any $e \in E(G)$. 
\end{itemize}

Let $V(G)=\{v_1,\ldots,v_n\}$ and consider the polynomial ring $S=\kk[x_1,\ldots,x_n]$. 
The {\em edge ideal} $I(G)$ of a graph $G$ is defined by $I(G)=( x_ix_j : \{v_i,v_j\} \in E(G)) \subset S$. 
Edge ideals are very well studied around the area of combinatorial commutative algebra. 
We notice that the edge ideal of $G$ coincides with the Stanley--Reisner ideal of $\Delta(G)$.

\bigskip

%%%%%%%%%%%%%%%%%%%%%%%%%%%%%%%%%%%%%%%%%%%%%%%%%%%%%%%%%%%%%%%%%%%%%%%%%%%%%%%%%%%%%%%%%%%%%%%%%%%%%%%%%%%%%%%%
%%%%%%%%%%%%%%%%%%%%%%%%%%%%%%%%%%%%%%%%%%%%%%%%%%%%%%%%%%%%%%%%%%%%%%%%%%%%%%%%%%%%%%%%%%%%%%%%%%%%%%%%%%%%%%%%
%%%%%%%%%%%%%%%%%%%%%%%%%%%%%%%%%%%%%%%%%%%%%%%%%%%%%%%%%%%%%%%%%%%%%%%%%%%%%%%%%%%%%%%%%%%%%%%%%%%%%%%%%%%%%%%%
%%%%%%%%%%%%%%%%%%%%%%%%%%%%%%%%%%%%%%%%%%%%%%%%%%%%%%%%%%%%%%%%%%%%%%%%%%%%%%%%%%%%%%%%%%%%%%%%%%%%%%%%%%%%%%%%

\section{Proofs of Theorem~\ref{main1} and Proposition~\ref{prop:main}}\label{sec:proofs1}

The goal of this section is to give proofs of Theorem~\ref{main1} and Proposition~\ref{prop:main}.

Let $\Delta$ be a Cohen--Macaulay simplicial complex of dimension $d-1$ satisfying $\Delta=\core(\Delta)$. Let 
\begin{align*}X(\Delta)=\{ F : \widetilde{H}_{\dim(\link(F))}(\link(F))\neq 0\}. \end{align*}
We define $M(\Delta)$ by setting the set of minimal faces of $X(\Delta)$ with respect to inclusion. 

We claim the following lemma: 
\begin{lem}\label{lem:key}
Work with the same notation as above. Then we have the following: 
\begin{align*}
&(1) \quad \bigcup_{F \in M(\Delta)} \star(F)=\Delta; \\
%&(2) \quad \sum_{F \in M(\Delta)} (d-|F|) \leq \reg(\Delta) \cdot \type(\Delta); \\
&(2) \quad d-|F| \leq \reg(\Delta) \text{ for each }F \in M(\Delta); \\
&(3) \quad |M(\Delta)| \leq \type(\Delta). 
\end{align*}
\end{lem}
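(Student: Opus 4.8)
The plan is to translate the combinatorial condition defining $X(\Delta)$ into the language of local cohomology and then read off all three statements from the $\ZZ^n$-graded structure of $H^d_{\mathfrak{m}}(\kk[\Delta])$, where $\mathfrak{m}=(x_1,\dots,x_n)$. Since $\Delta$ is Cohen--Macaulay of dimension $d-1$, the ring $\kk[\Delta]$ has depth $d=\dim\kk[\Delta]$, so $H^i_{\mathfrak{m}}(\kk[\Delta])=0$ for $i\neq d$, and by Reisner's criterion the only possibly nonvanishing reduced (co)homology of any link $\link(F)$ sits in top degree $\dim\link(F)=d-1-|F|$. Hochster's formula for local cohomology (together with its module structure, due to Gr\"abe) gives, for $a\in\NN^n$ with $\mathrm{supp}(a)=F$, a $\ZZ^n$-graded isomorphism $H^d_{\mathfrak{m}}(\kk[\Delta])_{-a}\cong\widetilde{H}^{d-1-|F|}(\link(F);\kk)$, vanishing unless $F\in\Delta$. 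Consequently $F\in X(\Delta)$ if and only if the component of $H^d_{\mathfrak{m}}(\kk[\Delta])$ in multidegree $-\mathbf{1}_F$ (with $\mathbf{1}_F$ the $0/1$ indicator vector of $F$) is nonzero; this dictionary underlies the whole argument.

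For (1), I would first observe that every facet $\sigma$ of $\Delta$ lies in $X(\Delta)$: indeed $\link(\sigma)=\{\emptyset\}$, whose reduced homology in degree $-1=\dim\link(\sigma)$ is $\kk\neq 0$. Since $M(\Delta)$ consists of the minimal faces of $X(\Delta)$, each facet $\sigma$ contains some $F\in M(\Delta)$. For arbitrary $G\in\Delta$, choosing a facet $\sigma\supseteq G$ and the corresponding $F\subseteq\sigma$ gives $F\cup G\subseteq\sigma\in\Delta$, hence $G\in\star(F)$; the reverse inclusion is trivial, giving (1). For (2), the component $-\mathbf{1}_F$ lies in ordinary $\ZZ$-degree $-|F|$, so $H^d_{\mathfrak{m}}(\kk[\Delta])_{-|F|}\neq 0$ whenever $F\in X(\Delta)$. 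Dualizing, $(\omega_\Delta)_{|F|}\neq 0$, so $-a(\Delta)=\min\{j:(\omega_\Delta)_j\neq 0\}\leq|F|$, and $\reg(\Delta)=a(\Delta)+d\geq d-|F|$, which is (2).

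The substantive part is (3), for which I would use graded local duality in the form $\type(\Delta)=\dim_\kk\mathrm{Soc}\big(H^d_{\mathfrak{m}}(\kk[\Delta])\big)$, since $\omega_\Delta$ is the graded Matlis dual of $H^d_{\mathfrak{m}}(\kk[\Delta])$ and its number of minimal generators equals the socle dimension of the dual. The goal is then to produce, for each $F\in M(\Delta)$, a nonzero socle element in multidegree $-\mathbf{1}_F$. Take any nonzero $z\in H^d_{\mathfrak{m}}(\kk[\Delta])_{-\mathbf{1}_F}\cong\widetilde{H}^{d-1-|F|}(\link(F))$. For $i\notin F$, multiplication by $x_i$ raises the $i$-th coordinate to $+1$, landing in a degree that is not $\leq 0$, where $H^d_{\mathfrak{m}}$ vanishes, so $x_iz=0$. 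For $i\in F$, multiplication by $x_i$ maps the $-\mathbf{1}_F$ component into the component supported on $F\setminus\{i\}$, namely $\widetilde{H}^{d-|F|}(\link(F\setminus\{i\}))$, the top cohomology of $\link(F\setminus\{i\})$; but $F\setminus\{i\}\subsetneq F$ is not in $X(\Delta)$ by minimality of $F$, so this target vanishes and again $x_iz=0$. Hence the whole component $H^d_{\mathfrak{m}}(\kk[\Delta])_{-\mathbf{1}_F}$ lies in the socle. Distinct $F\in M(\Delta)$ have distinct multidegrees $-\mathbf{1}_F$, so these contributions are linearly independent, giving $|M(\Delta)|\leq\dim_\kk\mathrm{Soc}\big(H^d_{\mathfrak{m}}(\kk[\Delta])\big)=\type(\Delta)$.

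The main obstacle I anticipate is (3), and specifically the need to control the $\kk[\Delta]$-module structure on $H^d_{\mathfrak{m}}(\kk[\Delta])$, not merely its graded vector-space dimensions: I must know that $x_i$ for $i\in F$ sends the $F$-supported piece into the $(F\setminus\{i\})$-supported piece. This is exactly Gr\"abe's description of the multiplication maps on the local cohomology of a Stanley--Reisner ring, and the argument uses it only through the vanishing of the target, so minimality of $F$ does the real work; nonetheless I would state this module structure carefully (or argue directly with the minimal generators of $\omega_\Delta$) to make the socle computation rigorous.
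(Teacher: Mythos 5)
Your proof is correct and follows essentially the same route as the paper: part (1) is identical, and parts (2) and (3) use the $\ZZ^n$-graded description of $\omega_\Delta$ via links of faces (the paper cites \cite[Exercise 5.6.6]{BH} directly, while you work with the Matlis-dual picture $H^d_{\mathfrak m}(\kk[\Delta])$ via Hochster's formula). The only real difference is that for (3) you make explicit, via Gr\"abe's module structure and the socle of $H^d_{\mathfrak m}(\kk[\Delta])$, the step the paper states only briefly --- that minimality of $F$ in $X(\Delta)$ forces the corresponding component to contribute a minimal generator of $\omega_\Delta$ --- which is a welcome added level of rigor but not a different argument.
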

\begin{proof}
(1) It is enough to show that $H \in \bigcup_{F \in M(\Delta)} \star(F)$ for each facet $H$ of $\Delta$. 
Given any facet $H \in \Delta$, since $\link(H)=\{\emptyset\}$ satisfies that $\dim_\kk(\widetilde{H}_{-1}(\link(H)) = 1$, 
we see that $H \in X(\Delta)$. Therefore, $H \in \star(F)$ for some $F \in M(\Delta)$. 

\noindent
(2) By \cite[Exercise 5.6.6]{BH}, we see that 
\begin{align}\label{non-vanishing}(\omega_\Delta)_j \neq 0 \;\Longrightarrow\; j \geq |F| \text{ for }F \in X(\Delta).\end{align} 
%In particular, if $(\omega_\Delta)_j \neq 0$, then we have $j \geq |F|$ for each $F \in M(\Delta)$. 
Hence, for each $F \in M(\Delta)$. we have $$-a(\Delta)=\min\{ j : (\omega_\Delta)_j \neq 0\} \leq |F|.$$ 
Therefore, we obtain that $\reg(\Delta)=a(\Delta)+d \geq d-|F|$. 
%2) (local cohomology についてのHochsterの公式よりF \in M (より一般にF \in X）なら d-|F| =< reg なので)

\noindent
(3) We see from \eqref{non-vanishing} that the elements of $X(\Delta)$ correspond to the non-vanishing squarefree $\ZZ^n$-degree components of $\omega_\Delta$, 
where $n$ is the number of vertices of $\Delta$. %(Consult, e.g., \cite[Section 5.7]{BH}.) 
Since $\type(\Delta)$ is equal to the number of minimal generators of $\omega_\Delta$ and each $F$ in $M(\Delta)$ should correspond to a minimal generator of $\omega_\Delta$, we obtain that $|M(\Delta)| \leq \type(\Delta)$. 
%3) |M| =< type ( Mは canonical module を考えた時の極小なmultidegree全体 で、各極小なmultidegreeごとに一つはcanonical moduleの生成元があるので）
\end{proof}

\begin{proof}[Proof of Theorem~\ref{main1}]
By our assumption $\Delta=\core(\Delta)$, there are facets $H,H'$ of $\Delta$ with $H \cap H' = \emptyset$. 
Moreover, the Cohen--Macaulayness of $\Delta$ implies that $\Delta$ is strongly connected (see Remark~\ref{strongly_connected}). 
Hence, there exists a sequence of facets $F_1,\ldots,F_p$ such that 
$F_1=H$, $F_p=H'$ and $\dim (F_i \cap F_{i+1}) = d-2$ for each $1 \leq i \leq p-1$. 
By Lemma~\ref{lem:key} (1), there exists a sequence of elements $S_1,\ldots,S_q \in M(\Delta)$ such that 
$F_{i_{j-1}+1},\ldots,F_{i_j} \in \star(S_j)$ for each $1 \leq j \leq q$, where $1=i_0 < i_1 < \cdots < i_q=p$. 
Then we may assume that 
\begin{align}\label{we_may_assume}
\text{$\star(S_i)$ and $\star(S_{i+1})$ contain a common facet for each $i$.}
\end{align}
In fact, by choice of a sequence $S_1,\ldots,S_q$, $\star(S_i)$ and $\star(S_{i+1})$ contain a common face $H$ with $\dim (H)=d-2$. 
Since there are at least two facets containing $H$, i.e., $\link(H)$ consists of at least two vertices, we have $H \in X(\Delta)$. 
Hence, there is $T \in M(\Delta)$ with $T \subset H$. If either $T=S_i$ or $T=S_{i+1}$ holds, 
then we see that $\star(S_i)$ and $\star(S_{i+1})$ contain a common facet. Even if $T \neq S_i$ and $T \neq S_{i+1}$, 
we may add $T$ between $S_i$ and $S_{i+1}$, i.e., we replace the sequence by $S_1,\ldots,S_i,T,S_{i+1},\ldots,S_q$. 
Then $\star(S_i)$ and $\star(T)$ contain a common facet, and so do $\star(T)$ and $\star(S_{i+1})$. 

Since we may also assume that $S_1,\ldots,S_q$ are all distinct, 
we can take a required sequence $S_1,\ldots,S_q$ satisfying \eqref{we_may_assume}. 
Then we can see that \begin{align}\label{inc_exc}
d-\left| \bigcap_{i=1}^j S_i \right| \leq \sum_{i=1}^j (d-|S_i|) \text{ for each }1 \leq j \leq q 
\end{align}
by induction. The case $j=1$ is trivial. Let $j>1$ and assume that \eqref{inc_exc} holds for $j-1$. 
Take a common facet $G$ of $\star(S_{j-1})$ and $\star(S_j)$. 
Then we have $|(\bigcap_{i=1}^{j-1}S_i) \cup S_j| \leq |S_{j-1} \cup S_j| \leq |G|=d$. 
Thus, by the hypothesis of induction, we conclude that 
\begin{align*}
d-\left| \left(\bigcap_{i=1}^{j-1} S_i\right) \cap S_j\right| 
&= d - \left(\left| \bigcap_{i=1}^{j-1} S_i\right| + |S_j| - \left| \left(\bigcap_{i=1}^{j-1} S_i\right) \cup S_j\right|\right) \\
&\leq \sum_{i=1}^{j-1}(d-|S_i|) -|S_j| + d = \sum_{i=1}^j(d-|S_i|). 
\end{align*}

Here, we have that $S_1 \subset H$ and $S_q \subset H'$. Hence, $S_1 \cap S_q \subset H \cap H' = \emptyset$. 
In particular, $\bigcap_{i=1}^q S_i= \emptyset$ holds. Therefore, by applying \eqref{inc_exc} with $j=q$, we obtain that 
\begin{align*}
d&=d-\left| \bigcap_{i=1}^q S_i \right| \leq \sum_{i=1}^q (d-|S_i|)\leq \max\{d-|S_i| : 1 \leq i \leq q\} \cdot |M(\Delta)| \\
&\leq \reg(\Delta) \cdot \type(\Delta) 
\end{align*}
by Lemma~\ref{lem:key} (2) and (3), as desired. 
\end{proof}

\medskip

Next, we prove Proposition~\ref{prop:main}. 
Here, we know that for a simplicial complex $\Delta$, $\Indeg(I_\Delta)=2$ holds if and only if there is a graph $G$ such that $\Delta=\Delta(G)$. 
Since $\Indeg(I)=2$ holds if $S/I$ has a $1$-linear resolution and $I$ contains no linear polynomial, 
we may discuss the edge ideals of graphs in the case (1). 

The following theorem is used for the proof of Proposition~\ref{prop:main} (1). 
\begin{thm}[{\cite[Theorem 3.3]{GV}}]\label{dim_codim}
Let $G$ be a graph with $n$ vertices all of which are non-isolated, and let $d-1=\dim(\Delta(G))$. 
Suppose that $\Delta(G)$ is Cohen--Macaulay. Then we have $n-d \geq d$. 
\end{thm}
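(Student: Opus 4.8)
The plan is to reduce the statement to a purely combinatorial assertion about $G$ and then establish it by a matching argument. Since $\Delta(G)$ is Cohen--Macaulay it is pure, so every maximal face of $\Delta(G)$, that is, every maximal independent set of $G$, has cardinality $d$; in particular the maximum independent sets of $G$ have size $d$, and $n-d$ is the common cardinality of the minimal vertex covers of $G$. The desired inequality $n-d\geq d$ therefore amounts to showing that a maximum independent set is no larger than its complementary vertex cover.

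First I would fix a maximum independent set $W$ of $G$, so $|W|=d$, and set $U=V(G)\setminus W$. I record two elementary facts: every $w\in W$ has a neighbour in $U$ (as $w$ is non-isolated and $W$ is independent), and every $u\in U$ has a neighbour in $W$ (otherwise $U\setminus\{u\}$ would still be a vertex cover, contradicting minimality of $U$). Thus the bipartite graph $B$ with parts $W$ and $U$ and edge set $\{\{w,u\}\in E(G):w\in W,\ u\in U\}$ has no isolated vertex. The key step is to produce a matching of $B$ saturating $W$: such a matching assigns to the $d$ vertices of $W$ pairwise distinct partners in $U$, whence $|U|=n-d\geq d$, exactly as wanted.

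To obtain this matching I would verify Hall's condition, namely $|N_G(S)|\geq |S|$ for every $S\subseteq W$, where $N_G(S)=\bigcup_{s\in S}N_G(s)\subseteq U$ since $W$ is independent. The engine is purity of $\Delta(G)$ in the following form: for any independent set $S$, extending $S$ to a maximal independent set gives a set of cardinality $d$, and conversely any maximal independent set of $G\setminus N_G[S]$ together with $S$ is maximal in $G$; hence the independence number of $G\setminus N_G[S]$ equals $d-|S|$. Assuming toward a contradiction a set $S\subseteq W$ with $|N_G(S)|<|S|$, chosen minimal with this property, I would combine this identity with the structure of a minimal Hall violator to exhibit a maximal independent set of $G$ of size strictly smaller than $d$, contradicting purity.

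The hard part is precisely this last conversion of a Hall violator into a small maximal independent set. The obstruction is that the complement $U$, equivalently the neighbourhoods $N_G(S)$, may carry internal edges, so one cannot simply replace $S$ by $N_G(S)$ inside a maximal independent set and count. I expect to handle this by passing to the induced subgraph on $S\cup N_G(S)$, in which $S$ is a maximal independent set while $N_G(S)$ is a vertex cover of smaller size, and then transporting a small maximal independent set back to $G$ via the extension identity above; controlling the interaction between $N_G(S)$ and $W\setminus S$ through the minimality of $S$ is the delicate point of the argument.
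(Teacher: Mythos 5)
The paper does not prove this statement at all---it is quoted verbatim from Gitler--Valencia \cite[Theorem 3.3]{GV}---so there is no internal proof to compare against; I can only assess your argument on its own terms. Your reduction is sound as far as it goes: Cohen--Macaulayness gives purity, purity of $\Delta(G)$ means $G$ is well-covered (all maximal independent sets have size $d$), and a matching of a maximum independent set $W$ into $U=V(G)\setminus W$ would indeed give $n-d\geq d$, so everything hinges on Hall's condition $|N_G(S)|\geq|S|$ for independent sets $S$. But that is exactly the step you do not carry out: you write ``I expect to handle this by passing to the induced subgraph on $S\cup N_G(S)$\dots controlling the interaction \dots is the delicate point,'' which is an acknowledgement that the core of the proof is missing. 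Moreover, the specific route you sketch has a concrete obstruction: well-coveredness is \emph{not} inherited by arbitrary induced subgraphs such as $G|_{S\cup N_G(S)}$; the only induced subgraphs you can control are those of the form $G\setminus N_G[T]$ for $T$ independent, via the identity $\alpha(G\setminus N_G[T])=d-|T|$ that you correctly isolate. As stated, the proposal is therefore a plan, not a proof.

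For what it is worth, the gap is fillable, but it needs one more construction than you provide. Take a minimal Hall violator $A$ (so $|N_G(A)|<|A|$ while every nonempty proper subset of $A$ satisfies Hall), put $B=N_G(A)$, choose a maximal independent subset $C$ of $G|_B$, and set $A_C=A\setminus N_G(C)$. Then $C\cup A_C$ is independent with $N_G[C\cup A_C]\supseteq N_G[A]$, so the identity $\alpha(G\setminus N_G[T])=d-|T|$ applied to $T=C\cup A_C$ and $T=A$ forces $|C|+|A_C|\geq|A|$, i.e.\ $|N_G(C)\cap A|\leq|C|$. One checks $\emptyset\neq A_C\subsetneq A$, so minimality gives $|N_G(A_C)|\geq|A_C|\geq|A|-|C|$; since $N_G(A_C)\subseteq B\setminus C$, this yields $|B|\geq|A|$, a contradiction. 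This is precisely the ``delicate point'' you deferred, and it uses only purity (not full Cohen--Macaulayness) plus the absence of isolated vertices---so your overall strategy is viable, but the argument as submitted does not establish the theorem.
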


\begin{proof}[Proof of Proposition~\ref{prop:main}]
(1) Let $n$ be the number of vertices of $G$ and let $c=n-d$, where $d-1=\dim(\Delta(G))$. 
Here, Betti numbers of homogeneous Cohen--Macaulay $\kk$-algebras with $p$-linear resolutions over the polynomial ring 
are completely determined by Eisenbud--Goto (\cite[Proposition 1.7 (c)]{EG}) and 
we know that $\type(\Delta)=\beta_c(\Delta) = \binom{c+p-1}{p}$ for a Cohen--Macaulay simplicial complex 
whose Stanley--Reisner ring has a $p$-linear resolution. From this, we obtain that $\type(\Delta(G))=c$. 
Since $\reg(\Delta(G))=1$, it follows from Proposition~\ref{dim_codim} that $\reg(\Delta(G))+\type(\Delta(G))-1 =c \geq d$, as required. 

\noindent
(2) Let $\Delta$ be a Cohen--Macaulay simplicial complex of dimension $d-1$ satisfying $\Delta=\core(\Delta)$. 
If $a(\Delta)=0$, since $a(\Delta)=\reg(\Delta)-\dim (\kk[\Delta])$, the inequality \eqref{ineq'} trivially holds. 
Note that if $\Delta$ is Gorenstein and $\Delta=\core(\Delta)$, then $a(\Delta)=0$ (see \cite[Exercise 5.6.8]{BH}). 
Thus, the inequality \eqref{ineq'} holds, as required. 
\end{proof}

\bigskip

\section{Proof of Theorem~\ref{main2}}\label{sec:proofs2}

The goal of this section is to construct examples of simplicial complexes $\Delta$ such that 
the triple $(\dim (\Delta), \reg (\Delta), \type (\Delta))$ satisfies the required inequalities. 
We construct such simplicial complexes as independent complexes. 

Before the construction of examples, 
let us recall two constructions of graphs from a given graph $G$; {\em whiskered graph $W(G)$} and {\em $S$-suspension $G^S$}. 
Let $G$ be a graph on the vertex set $V(G)=\{v_1,\ldots,v_d\}$ with the edge set $E(G)$. 

\subsection{Whiskered graphs}\label{sec:W(G)}

We define the {\em whiskered graph} of $G$, denoted by $W(G)$, by setting 
\begin{equation}\label{notation:W(G)}
\begin{split}
V(W(G))=V(G) \sqcup \{w_1,\ldots,w_d\} \;\text{ and }\; E(W(G))=E(G) \sqcup \{\{v_i,w_i\} : i=1,\ldots,d\}, 
\end{split}
\end{equation}
where $w_1,\ldots,w_d$ are new vertices. 

\begin{lem}[{cf. \cite{CN}, \cite[Theorem 1.1]{HHKO}}]\label{lem:whisker}
For any graph $G$ with $d$ vertices, $\Delta(W(G))$ is pure and vertex decomposable. 
In particular, $\Delta(W(G))$ is a Cohen--Macaulay simplicial complex of dimension $d-1$. 
\end{lem}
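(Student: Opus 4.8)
The plan is to prove both assertions by induction on the number $d = |V(G)|$ of vertices of $G$, establishing purity directly and vertex decomposability via a carefully chosen shedding vertex. Once $\Delta(W(G))$ is shown to be pure of dimension $d-1$ and vertex decomposable, the Cohen--Macaulayness follows immediately from the quoted fact that vertex decomposability implies Cohen--Macaulayness for pure complexes.

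For purity, I would argue that every facet (i.e. maximal independent set) $F$ of $\Delta(W(G))$ meets each pair $\{v_i,w_i\}$ in exactly one vertex. Since $\{v_i,w_i\} \in E(W(G))$, at most one of $v_i,w_i$ lies in $F$; and if neither did, then because the only neighbor of $w_i$ in $W(G)$ is $v_i$, the set $F \cup \{w_i\}$ would still be independent, contradicting maximality. Hence $|F| = d$ for every facet, so $\Delta(W(G))$ is pure of dimension $d-1$.

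For vertex decomposability I would take the original vertex $v_d$ as a shedding vertex. The two key computations are the following. First, $\Delta(W(G)) \setminus v_d$ is the independence complex of $W(G) \setminus v_d$, in which $w_d$ becomes isolated (its only neighbor $v_d$ is gone); thus $W(G) \setminus v_d = W(G \setminus v_d) \sqcup \{w_d\}$ and therefore $\Delta(W(G)) \setminus v_d = \Delta(W(G \setminus v_d)) * w_d$ is a cone. Second, $\link(v_d)$ consists of the independent sets avoiding $N_{W(G)}[v_d] = N_G(v_d) \cup \{v_d,w_d\}$, and after deleting these vertices each $w_j$ with $v_j \in N_G(v_d)$ becomes isolated, so $\link(v_d)$ is an iterated cone over $\Delta(W(G|_B))$, where $B = V(G \setminus v_d) \setminus N_G(v_d)$. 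Since $G \setminus v_d$ and $G|_B$ have fewer than $d$ vertices, both $\Delta(W(G \setminus v_d))$ and $\Delta(W(G|_B))$ are vertex decomposable by the induction hypothesis; and because a cone over a vertex decomposable complex is again vertex decomposable (a short induction using any shedding vertex of the base), both $\Delta(W(G)) \setminus v_d$ and $\link(v_d)$ are vertex decomposable.

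It then remains to verify the shedding condition, namely that no face of $\link(v_d)$ is a facet of $\Delta(W(G)) \setminus v_d$. Here I would exploit the isolated vertex $w_d$: every facet of $\Delta(W(G)) \setminus v_d$ contains $w_d$, as it is the apex of the cone in the first computation, whereas $w_d$ is not even a vertex of $\link(v_d)$, having been removed in the second computation. Thus no face of $\link(v_d)$ can contain $w_d$, and hence none can be a facet of $\Delta(W(G)) \setminus v_d$, which establishes the shedding condition. The base case $d = 1$, where $W(G)$ is a single edge and $\Delta(W(G))$ is two points, is immediate. I expect the main difficulty to be the bookkeeping of exactly which whisker vertices become isolated in the link and in the deletion: it is precisely these isolated vertices that realize both complexes as cones and that supply the shedding condition, so the argument must track them carefully.
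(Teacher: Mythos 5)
Your argument is correct: the purity claim, the identification of the deletion $\Delta(W(G))\setminus v_d$ as a cone with apex $w_d$ over $\Delta(W(G\setminus v_d))$, the identification of $\link(v_d)$ as an iterated cone over $\Delta(W(G|_B))$, and the use of $w_d$ to verify the shedding condition all check out (the degenerate cases $B=\emptyset$ and $d=1$ are harmless). The paper does not prove this lemma but merely cites \cite{CN} and \cite[Theorem 1.1]{HHKO}, and your induction on $|V(G)|$ with an original vertex as shedding vertex is essentially the standard argument given in those sources, so there is nothing to compare beyond noting that you have supplied the omitted proof.
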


%\begin{prop}\label{prop:W(G)vd}
%For any graph $G$ with $d$ vertices, $\Delta(W(G))$ satisfies the condition \eqref{vd:condition} in Theorem~\ref{main1}. 
%\end{prop}
%\begin{proof}
%In general, for each vertex $v \in V(G)$, we see that $\link_{\Delta(G)}(v)=\Delta(G \setminus N_G[v])$. 
%
%Work with the same notation as \eqref{notation:W(G)}. The statement follows from the observation 
%that each $w_i$ is a shedding vertex and $$\link_{\Delta(W(G))}(w_i)=\Delta(W(G) \setminus N_{W(G)}[w_i])=\Delta(W(G \setminus \{v_i\})).$$ 
%\end{proof}

\subsection{$S$-suspensions}

Given an independent set $S$ of $G$, we define the {\em $S$-suspension} of $G$, denoted by $G^S$, by setting 
$$V(G^S)=V(G) \sqcup \{w\}\text{ and }E(G^S)=E(G) \sqcup \{\{v,w\} : v \not\in S\},$$
where $w$ is a new vertex. 
Note that $S \sqcup\{w\}$ becomes an independent set of $G^S$. Hence, in the language of the independent complex, 
$\Delta(G^S)$ is the ridge sum of $\Delta(G)$ and a new simplex $S \sqcup \{w\}$ along the ridge $S$. 
%Here, a simplicial complex $\widetilde{\Delta}$ of dimension $(d-1)$ is the {\em ridge sum} of two simplicial complexes $\Delta$ and $\Delta'$ 
%if $\widetilde{\Delta}=\Delta \cup \Delta'$ and $\Delta \cap \Delta'$ is a simplex of dimension $(d-2)$. 
%The notion of the ridge sum was introduced in \cite{MM}. 

\begin{lem}[{\cite[Lemma 1.2]{HKKMV}, \cite[Theorem 2.9]{DS}, \cite[Lemma 1.5]{HKM}}]\label{lem:S-suspension}%\cite[Theorem 2.9]{}
Let $G$ be a graph and let $S$ be an independent set of $G$. 
\begin{enumerate}
\item Assume that $\Delta(G)$ is Cohen--Macaulay. If $|S|=\dim(\Delta(G))$, then $\Delta(G^S)$ is also Cohen--Macaulay of the same dimension as $\Delta(G)$. 
\item Assume that $|S|=\dim(\Delta(G))$. Then $\type(\Delta(G^S))=\type(\Delta(G))+1$.
\item Assume that $G$ has no isolated vertex. Then $\reg(\Delta(G^S))=\reg(\Delta(G))$. 
\end{enumerate}
\end{lem}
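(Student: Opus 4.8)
The plan is to prove the three parts of Lemma~\ref{lem:S-suspension} by carefully tracking how the combinatorial structure of $\Delta(G)$ changes when we pass to the ridge sum $\Delta(G^S)=\Delta(G)\cup(S*w)$, exploiting the fact that the new simplex $S*w$ is glued along the ridge $S$, which by hypothesis has dimension $\dim(\Delta(G))-1=d-2$ (since $|S|=\dim(\Delta(G))=d-1$ means $S$ is a $(d-2)$-face, a ridge of the $(d-1)$-dimensional complex). First I would record the basic geometric picture: $\Delta(G^S)$ is obtained by attaching one new facet $S\cup\{w\}$ of dimension $d-1$ along the codimension-one face $S$, so the dimension is preserved and the attachment is as mild as possible.

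For part (1), the plan is to use the vertex $w$ as a shedding-type vertex and reduce Cohen--Macaulayness of $\Delta(G^S)$ to that of $\Delta(G)$ via the standard Mayer--Vietoris / link-and-deletion machinery. Concretely, I would verify Reisner's criterion by examining $\link_{\Delta(G^S)}(F)$ for each face $F$. For faces not involving $w$, the link either coincides with the corresponding link in $\Delta(G)$ or acquires $w$ as a cone point (when $F\subseteq S$), and coning preserves acyclicity; for faces containing $w$, the link sits inside the simplex on $S$ and is itself a simplex or a cone, hence acyclic in the required range. The deletion $\Delta(G^S)\setminus w=\Delta(G)$ is Cohen--Macaulay by hypothesis and $\link(w)=\langle S\rangle$ is a simplex, so a Mayer--Vietoris sequence relating $\widetilde H_i(\Delta(G^S))$ to $\widetilde H_i(\Delta(G))$, $\widetilde H_i(\langle S\rangle)$, and $\widetilde H_{i}$ of their intersection $\langle S\rangle$ forces the reduced homology of $\Delta(G^S)$ to match that of $\Delta(G)$ in the top degree, giving the criterion.

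For part (2), the plan is to compute the type via the canonical module as in Lemma~\ref{lem:key}, identifying $\type(\Delta)=|M(\Delta)|$ with the number of faces $F$ for which $\widetilde H_{\dim\link(F)}(\link(F))\neq 0$ in the minimal sense. Attaching the single new facet $S\cup\{w\}$ creates exactly one new top-dimensional face whose presence makes $S$ no longer a facet but a ridge shared by an extra facet; the key point is to show that this contributes precisely one additional minimal generator to $\omega_{\Delta(G^S)}$, namely the one corresponding to the new facet $S\cup\{w\}$, while all previously existing minimal generators of $\omega_{\Delta(G)}$ survive unchanged. I would make this precise by comparing the nonvanishing squarefree $\ZZ^n$-graded components of the two canonical modules using the Hochster-type formula, checking that the new facet contributes a new nonzero top homology class and that no cancellation occurs among the old classes.

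For part (3), the plan is to invoke the known formula $\reg(\Delta(G^S))=\reg(\Delta(G))$ via the regularity behavior under ridge sums, or equivalently via the induced-matching / homological description of regularity for edge ideals, using the hypothesis that $G$ has no isolated vertex to guarantee that adding $w$ (which by construction is adjacent to every vertex outside $S$) does not increase the induced matching number or create new linear strands beyond those already present; the relevant invariant governing regularity is unchanged because the new edges all meet the old graph densely. The hard part will be part (2): controlling the minimal generators of the canonical module precisely enough to see that the count increases by exactly one requires showing both that the new facet genuinely contributes a fresh minimal generator and that the gluing does not destroy or merge any of the existing ones, which is where the assumption $|S|=\dim(\Delta(G))$ is essential since it guarantees the attachment is along a ridge rather than a lower-dimensional face. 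I would expect to lean on the explicit correspondence between $M(\Delta)$ and generators of $\omega_\Delta$ established in Lemma~\ref{lem:key}, together with a direct homology computation for the links created by the attachment.
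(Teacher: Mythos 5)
The paper does not actually prove this lemma; it simply cites \cite[Lemma 1.2]{HKKMV} for (1), \cite[Theorem 2.9]{DS} for (2), and \cite[Lemma 1.5]{HKM} for (3). So your attempt to give a self-contained argument is a genuinely different (and more ambitious) route. Your plan for (1) is essentially sound: since the only faces of $\Delta(G^S)$ containing $w$ lie in the simplex on $S\cup\{w\}$, every link is either unchanged, a simplex, or (for $F\subseteq S$) the ridge sum of $\link_{\Delta(G)}(F)$ with a simplex glued along the contractible complex $\langle S\setminus F\rangle$, and Mayer--Vietoris then shows the reduced homology agrees with that of $\link_{\Delta(G)}(F)$ in all degrees. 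Two corrections, though: for $F\subsetneq S$ the link does \emph{not} ``acquire $w$ as a cone point'' (the vertex $w$ is joined only to subsets of $S\setminus F$, not to all of $\link_{\Delta(G)}(F)$), and the Mayer--Vietoris argument must be run for \emph{every} face $F\subseteq S$, not just for the whole complex as you phrase it.

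The serious gaps are in (2) and (3). For (2) you propose to identify $\type(\Delta)$ with $|M(\Delta)|$, the number of (minimal) faces $F$ with $\widetilde H_{\dim\link(F)}(\link(F))\neq 0$; but Lemma~\ref{lem:key}(3) only gives the inequality $|M(\Delta)|\leq\type(\Delta)$, and equality fails in general: the type is the number of minimal generators of $\omega_\Delta$, a single face can carry a homology group of dimension greater than one, and non-minimal faces of $X(\Delta)$ can still contribute minimal generators. Merely ``comparing the nonvanishing squarefree $\ZZ^n$-graded components'' does not determine the minimal number of generators --- for that you must control the multiplication maps $(\omega_\Delta)_{\mathbf a}\to(\omega_\Delta)_{\mathbf a+\mathbf e_i}$ (Gr\"abe's description), in particular showing that the new component in degree $\mathbf e_{S\cup\{w\}}$ is not in the image of the components supported on subsets of $S$. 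As written, the plan does not establish that the type increases by exactly one. For (3), the appeal to the induced matching number is not valid: $\reg(\Delta(G))=\indmat(G)$ holds only under extra hypotheses (e.g.\ the $C_5$-free vertex decomposable case of \cite{KM}), and in general one only has $\indmat(G)\leq\reg(\Delta(G))$. The standard argument instead uses that $G=G^S\setminus w$ is an induced subgraph (giving $\reg(\Delta(G^S))\geq\reg(\Delta(G))$) together with the bound $\reg(\Delta(H))\leq\max\{\reg(\Delta(H\setminus w)),\,\reg(\Delta(H\setminus N_H[w]))+1\}$; since $G^S\setminus N_{G^S}[w]=G|_S$ is edgeless, its Stanley--Reisner ring has regularity $0$, and the hypothesis that $G$ has no isolated vertex gives $\reg(\Delta(G))\geq 1$, which closes the upper bound. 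Your phrase ``the new edges all meet the old graph densely'' does not substitute for this computation.
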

\begin{proof}
The statement (1) (resp. (3)) is a direct consequence of \cite[Lemma 1.2]{HKKMV} (resp. \cite[Lemma 1.5]{HKM}). 
The statement (2) follows from \cite[Theorem 2.9]{DS}. 
\end{proof}

\subsection{Whiskered graphs of complete multi-partite graphs}

For the construction of our desired graph, we consider the whiskered graphs of complete multi-partite graphs, 
which play the essential role in the proof of Theorem~\ref{main2}. 
\begin{prop}\label{complete_multipartite}
Let $r_1,\ldots,r_t$ be integers with $r_1 \geq \cdots \geq r_t \geq 1$. Consider $G=W(K_{r_1,\ldots,r_t})$. Then 
$$\dim(\Delta(G)) = \sum_{i=1}^t r_i-1,\;\; \reg(\Delta(G))=r_1 \;\text{ and }\; \type(\Delta(G))=t.$$
\end{prop}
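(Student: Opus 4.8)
The plan is to treat the three invariants separately, in increasing order of difficulty. Write $d=\sum_{i=1}^t r_i$, so that $G=W(K_{r_1,\ldots,r_t})$ has $2d$ vertices, and set $\Delta=\Delta(G)$. The dimension is immediate: by Lemma~\ref{lem:whisker}, $\Delta$ is pure, vertex decomposable and Cohen--Macaulay of dimension $|V(K_{r_1,\ldots,r_t})|-1=d-1$, which is the first assertion.

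For the regularity I would exploit the identity $\reg(\Delta)=a(\Delta)+d=\deg(h_\Delta)$, valid for any Cohen--Macaulay $\Delta$ since $a(\Delta)=\deg(h_\Delta)-d$, where $h_\Delta$ denotes the $h$-polynomial (numerator of the Hilbert series of $\kk[\Delta]$ over $(1-s)^d$). It therefore suffices to compute $h_\Delta$ and read off its degree. The faces of $\Delta$ are the independent sets of $G$, each of which is uniquely $A\sqcup B$ with $A$ an independent set of $K_{r_1,\ldots,r_t}$ and $B\subseteq\{w_j:v_j\notin A\}$; for fixed $A$ with $|A|=a$ there are $\binom{d-a}{b}$ choices of $B$ with $|B|=b$. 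Plugging this into $h_\Delta(s)=\sum_{F\in\Delta}s^{|F|}(1-s)^{d-|F|}$ and summing over $B$ first gives, for each $A$, the telescoping factor $(s+(1-s))^{d-a}=1$, so that
\[
h_\Delta(s)=\sum_{A}s^{|A|}=\sum_{i=1}^t(1+s)^{r_i}-(t-1),
\]
the independence polynomial of $K_{r_1,\ldots,r_t}$ (the last equality because the independent sets of $K_{r_1,\ldots,r_t}$ are exactly the subsets of a single part). The leading term $s^{r_1}$ survives with positive coefficient, so $\deg(h_\Delta)=r_1$ and hence $\reg(\Delta)=r_1$.

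For the type I would first produce $t$ elements of $M(\Delta)$ and invoke Lemma~\ref{lem:key}(3). For each $i_0$ let $F_{i_0}$ be the set of all whiskers except those attached to the part of size $r_{i_0}$; then $N_G[F_{i_0}]$ is exactly the union of the other parts together with all their whiskers, so $\link(F_{i_0})=\Delta(W(K_{r_{i_0}}))$, the independent complex of $r_{i_0}$ disjoint edges, which is homotopy equivalent to a sphere of dimension $r_{i_0}-1$ and thus has nonzero top homology; hence $F_{i_0}\in X(\Delta)$. Minimality follows from the regularity computation applied to links: a proper subset of $F_{i_0}$ leaves a whiskered complete multipartite graph with at least two nonempty parts, whose $h$-polynomial has degree strictly smaller than its dimension plus one, forcing the top homology to vanish. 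This yields $t$ distinct minimal faces, so $\type(\Delta)\geq|M(\Delta)|\geq t$.

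The main obstacle is the reverse inequality $\type(\Delta)\leq t$, for which Lemma~\ref{lem:key}(3) is insufficient. I would establish it by pinning down $M(\Delta)$ completely and then controlling the minimal generators of $\omega_\Delta$. The key combinatorial point is that no minimal element of $X(\Delta)$ contains a vertex of $K_{r_1,\ldots,r_t}$: such a vertex is adjacent to every vertex of the other parts, so deleting its closed neighbourhood orphans the whiskers of those parts, turning $\link(F)$ into a cone and killing all of its homology; a short reduction then shows that the only minimal elements are the $F_{i_0}$, so $M(\Delta)=\{F_1,\ldots,F_t\}$. To convert $|M(\Delta)|=t$ into $\type(\Delta)=t$ I would use that the canonical module $\omega_\Delta$ is a squarefree module, whence it is minimally generated in squarefree degrees; its nonvanishing squarefree degrees are governed by $X(\Delta)$ via \eqref{non-vanishing}, and the computation that each $\widetilde{H}_{\dim\link(F_{i_0})}(\link(F_{i_0}))$ is one-dimensional shows that $\omega_\Delta$ needs exactly one generator in each of the $t$ degrees coming from $M(\Delta)$. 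This last verification --- that there are no further minimal generators in higher squarefree degrees --- is the delicate step and the crux of the argument.
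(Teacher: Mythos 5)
Your dimension argument coincides with the paper's, and your regularity argument, while different, is correct and complete: the paper instead invokes \cite[Theorem 2.4]{KM} ($\reg(\Delta(H))=\indmat(H)$ for $C_5$-free graphs with vertex decomposable independence complex) and computes $\indmat(G)=r_1$, whereas you compute the $h$-polynomial directly, using that for Cohen--Macaulay $\Delta$ one has $\reg(\Delta)=\deg h_\Delta$ and that the $h$-polynomial of $\Delta(W(G_0))$ telescopes to the independence polynomial of $G_0$, which for $K_{r_1,\ldots,r_t}$ is $\sum_i(1+s)^{r_i}-(t-1)$ of degree $r_1$. That computation is valid and arguably more self-contained than the citation the paper uses.

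The type computation, however, has a genuine gap, and you flag it yourself. Determining $M(\Delta)=\{F_1,\ldots,F_t\}$ and invoking Lemma~\ref{lem:key}(3) gives only the lower bound $\type(\Delta)\geq t$. For the upper bound, knowing all of $X(\Delta)$ together with $\dim_\kk\widetilde{H}_{\dim\link(F)}(\link(F))$ for every $F$ does \emph{not} determine the number of minimal generators of $\omega_\Delta$: the generators sit in squarefree degrees indexed by $X(\Delta)$, but whether a component $(\omega_\Delta)_F$ with $F\in X(\Delta)\setminus M(\Delta)$ requires a new generator depends on whether the multiplication maps from the components at smaller faces surject onto it, i.e., on the maps $\widetilde{H}_{\dim\link(F')}(\link(F'))\to\widetilde{H}_{\dim\link(F)}(\link(F))$ in Gr\"abe's description of the module structure of $\omega_\Delta$. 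You do not carry out this verification --- you explicitly call it ``the delicate step and the crux of the argument'' --- so the inequality $\type(\Delta)\leq t$ is not established. Note that here $X(\Delta)$ contains many faces strictly above the $F_{i_0}$ (e.g.\ the set of all whiskers, whose link is $\{\emptyset\}$), so the issue is not vacuous. The paper avoids this entirely by citing the Crupi--Rinaldo--Terai formula (Proposition~\ref{prop:type}), which for a Cohen--Macaulay graph on $2d$ vertices satisfying \eqref{condition} expresses $\type(\Delta(G))$ as the number of minimal vertex covers of the auxiliary graph $O_{[d]}(G)|_X$; for $G=W(K_{r_1,\ldots,r_t})$ this reduces to counting minimal vertex covers of $K_{r_1,\ldots,r_t}$, which are exactly the complements of single parts, giving $t$. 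To complete your route you would either need to prove the surjectivity of the relevant homology maps or substitute an external result such as this one.
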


For the proof of Proposition~\ref{complete_multipartite}, we recall a result from \cite{CRT}. 
Let $G$ be a graph on the vertex set $V(G)$ with $2d$ vertices all of which are non-isolated vertices. We consider the condition that 
\begin{equation}\label{condition}
\begin{split}
&V(G) = X \sqcup Y, \;\text{where }\\
&\text{$X=\{x_1,\ldots,x_d\}$ is a minimal vertex cover of $G$, and } \\
&\text{$Y=\{y_1,\ldots,y_d\}$ is a maximal independent set of $G$, such that }\\
&\{\{x_i,y_i\} : i=1,\ldots,n\} \subset E(G). 
\end{split}
\end{equation}

Consider the graph satisfying \eqref{condition}. Let $E_i=\{ k \in \{1,\ldots,d\} : \{x_k,y_i\} \in E(G)\} \setminus \{i\}$ for $i=1,\ldots,d$, 
and let $O_{[d]}(G)$ be the graph on the vertex set $V(O_{[d]}(G))=V(G)$ with the edge set 
\begin{align*}
E(O_{[d]}(G))=\left(E(G) \setminus \bigcup_{i=1}^d\{\{x_k,y_i\} : k \in E_i\}\right) \cup \bigcup_{i=1}^d\{\{x_k,x_i\} : k \in E_i\}. 
\end{align*}
\begin{prop}[{\cite[Corollary 4.4]{CRT}}]\label{prop:type}
Let $G$ be a graph with $2d$ vertices all of which are non-isolated vertices and let $\dim(\Delta(G))=d-1$. 
Assume that $\Delta(G)$ is Cohen--Macaulay and $G$ satisfies \eqref{condition}. Then 
$$\type(\Delta(G))=\upsilon(O_{[n]}(G)|_X),$$
where $\upsilon(H)$ denotes the number of minimal vertex covers of a graph $H$. 
\end{prop}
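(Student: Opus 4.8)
The plan is to compute the Cohen--Macaulay type as the dimension of the socle of an explicit linear Artinian reduction of $\kk[\Delta(G)]$, and then to recognize that reduction as the edge ring of $O_{[d]}(G)|_X$ with all squares adjoined. Write $S=\kk[x_1,\ldots,x_d,y_1,\ldots,y_d]$ and $R=\kk[\Delta(G)]=S/I(G)$. First I would exhibit the linear system of parameters $\theta=\{x_1+y_1,\ldots,x_d+y_d\}$. Eliminating $y_i=-x_i$ identifies $S/(\theta)$ with $T=\kk[x_1,\ldots,x_d]$, under which $I(G)$ maps to a monomial ideal $J\subseteq T$: each whisker $x_iy_i$ becomes $-x_i^2$, each edge inside $X$ keeps its monomial $x_ix_j$, and each remaining edge $x_iy_j$ with $i\neq j$ becomes $-x_ix_j$ (there are no edges inside $Y$, since $Y$ is independent). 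Because every whisker $\{x_i,y_i\}$ is present by \eqref{condition}, all squares $x_i^2$ lie in $J$, so $A:=T/J=R/(\theta)$ is Artinian; as $\dim R=\dim\Delta(G)+1=d$, the $d$ elements of $\theta$ form a linear system of parameters. Since $R$ is Cohen--Macaulay, $\theta$ is a regular sequence and type is preserved modulo it, giving $\type(\Delta(G))=\dim_\kk\operatorname{Soc}(A)$.

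Next I would identify $A$ combinatorially. From the substitution above, the squarefree part of $J$ is generated by exactly those $x_ix_j$ with $i\neq j$ for which $\{x_i,x_j\}\in E(G)$, or $\{x_i,y_j\}\in E(G)$, or $\{x_j,y_i\}\in E(G)$. Comparing this with the definitions of $E_i$ and of $O_{[d]}(G)$, these are precisely the pairs with $\{x_i,x_j\}\in E(O_{[d]}(G)|_X)$ (the surviving edges of $G$ inside $X$ together with the lifted edges $\{x_k,x_i\}$, $k\in E_i$). Hence $J=I(O_{[d]}(G)|_X)+(x_1^2,\ldots,x_d^2)$, so that $A=\kk[x_1,\ldots,x_d]/\bigl(I(H)+(x_1^2,\ldots,x_d^2)\bigr)$ for $H:=O_{[d]}(G)|_X$. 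A monomial basis of $A$ is then the set of squarefree monomials $\prod_{i\in W}x_i$ with $W$ an independent set of $H$.

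Finally I would read off the socle. As $J$ is a monomial ideal, $A$ is finely graded and $\operatorname{Soc}(A)$ is spanned by monomials; a basis monomial $\prod_{i\in W}x_i$ (with $W$ independent) lies in the socle exactly when $x_i\cdot\prod_{j\in W}x_j\in J$ for every $i$. For $i\in W$ this holds automatically since $x_i^2\in J$, while for $i\notin W$ it holds iff $W\cup\{i\}$ is not independent in $H$, i.e. iff $i$ has a neighbor in $W$. Thus the socle monomials correspond to the maximal independent sets of $H$, which biject with the minimal vertex covers of $H$ by complementation, yielding $\type(\Delta(G))=\dim_\kk\operatorname{Soc}(A)=\upsilon(O_{[d]}(G)|_X)$. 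I expect the only delicate point to be the bookkeeping in the middle step: checking edge-for-edge that the squarefree part of the substituted ideal $J$ coincides with $E(O_{[d]}(G)|_X)$ — in particular handling the symmetric contributions of $\{x_i,y_j\}$ and $\{x_j,y_i\}$ and confirming that no further squarefree generators appear — whereas the passage to the socle and the regular-sequence reduction are routine.
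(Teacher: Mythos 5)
Your proof is correct, but note that there is no proof in the paper to compare it against: the paper imports this statement verbatim from \cite[Corollary 4.4]{CRT} and proves nothing itself, so what you have produced is a self-contained replacement for the citation. Your argument is sound at every step: $\theta_i=x_i+y_i$ is a homogeneous system of parameters because the quotient is Artinian (the whiskers $\{x_i,y_i\}$ guaranteed by \eqref{condition} give all squares $x_i^2\in J$) and because $\dim R=d$ by hypothesis; Cohen--Macaulayness upgrades $\theta$ to a regular sequence, so $\type(R)=\dim_\kk\operatorname{Soc}(R/(\theta))$; the edge-by-edge bookkeeping under $y_j\mapsto -x_j$ is exactly right, since the removed edges of $G$ in the definition of $O_{[d]}(G)$ are precisely the mixed edges $\{x_k,y_i\}$, $k\in E_i$, and their images $\pm x_kx_i$ account for the lifted edges, while edges of $G$ inside $X$ survive unchanged and $Y$ independent rules out further squarefree generators; and since $J$ is a monomial ideal the socle is monomially spanned, with socle monomials the maximal independent sets of $H=O_{[d]}(G)|_X$, which complement to minimal vertex covers (this bijection holds even if $H$ has isolated vertices, which then simply lie in every maximal independent set). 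The signs are harmless in every characteristic, including characteristic $2$ where $y_j\mapsto x_j$ gives the same monomial ideal. Your depolarization mechanism is in fact exactly what motivates the definition of $O_{[d]}(G)$ in \cite{CRT} --- the operation trades the substituted squares back for whiskers --- so your route is in the spirit of the original, just streamlined to a direct socle count. One cosmetic point: the statement's ``$O_{[n]}(G)|_X$'' (and the ``$i=1,\ldots,n$'' in \eqref{condition}) are typos for $d$, which you silently and correctly normalized to $O_{[d]}(G)|_X$.
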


Before giving the proof of Proposition~\ref{complete_multipartite}, we fix the notation on $K_{r_1,\ldots,r_t}$. 
Let $V_i=\{x_1^{(i)},\ldots,x_{r_i}^{(i)}\}$ for $i=1,\ldots,t$ and let $V(K_{r_1,\ldots,r_t})=\bigsqcup_{i=1}^t V_i$. 
%\begin{align*}
%V(K_{r_1,\ldots,r_t})=\bigsqcup_{i=1}^t V_i\text{ and let }V_i=\{x_1^{(i)},\ldots,x_{r_i}^{(i)}\}\text{ for each }i. 
%\end{align*}
Let $G=W(K_{r_1,\ldots,r_t})$, let $V(G)=\{x^{(i)}_j, y^{(i)}_j : 1 \leq i \leq t, 1 \leq j \leq r_i\}$, and let 
\begin{align*}
%&E(K_{r_1,\ldots,r_t})=\{x,x' : x \in V_i\text{ and }x' \in V_j \text{ for }1 \leq i<j \leq t\}; \\
E(G)=E(K_{r_1,\ldots,r_t}) \sqcup \{\{x_j^{(i)},y_j^{(i)}\} : 1 \leq i \leq t, 1 \leq j \leq r_i\}.
\end{align*}
\begin{proof}[Proof of Proposition~\ref{complete_multipartite}]
First, $\dim(\Delta(G))=\sum_{i=1}^tr_i-1$ directly follows from Lemma~\ref{lem:whisker}. 

\medskip

Next, we show that $\reg(\Delta(G))=r_1$. 
Here, it follows from \cite[Theorem 2.4]{KM} that $\reg(\Delta(H))=\indmat(H)$ 
if a graph $H$ contains no induced cycle of length $5$ and $\Delta(H)$ is vertex decomposable. 
By the structure of $G$, we see that $G$ contains no induced cycle of length $5$. 
Moreover, Lemma~\ref{lem:whisker} implies that $\Delta(G)$ is vertex decomposable. 
Since we see that $\{\{x_j^{(i)},y_j^{(i)}\} : j=1,\ldots,r_1\}$ forms an induced matching of $G$ for each $i$ and those are maximal ones, 
we conclude that $\indmat(G)=r_1$. 

\medskip

Finally, we show that $\type(\Delta(G))=t$. Here, we can check that $G$ satisfies \eqref{condition} 
by setting $X=V(K_{r_1,\ldots,r_t})$ and $Y=V(G) \setminus X$. Under this setting, we see that $O_{[d]}(G)=E(K_{r_1,\ldots,r_t})$, 
i.e., all vertices in $Y$ become isolated in $O_{[d]}(G)$. 
Hence, we may count the number of minimal vertex covers of $K_{r_1,\ldots,r_t}$. Let $V=V(K_{r_1,\ldots,r_t})$. 
Then $C \subset V$ is a minimal vertex cover if and only if $C=V \setminus V_i$ for some $i$. 
In fact, if there are $i$ and $i'$ with $i \neq i'$ such that $V_i \setminus C \neq \emptyset$ and $V_{i'} \setminus C \neq \emptyset$, 
since we consider a complete multi-partite graph, there must be an edge between $V_i \setminus C$ and $V_{i'} \setminus C$, a contradiction. 
Therefore, one has $\upsilon(O_{[d]}(G|_X))=\upsilon(O_{[d]}(K_{r_1,\ldots,r_t}))=t$, so we conclude that $\type(\Delta(G))=t$ by Proposition~\ref{prop:type}, as required. 
\end{proof}

Now, we are ready to prove Theorem~\ref{main2}. 
\begin{proof}[Proof of Theorem~\ref{main2}]
Let $d,r,t$ be integers with $r,t \geq 2$ and $r \leq d \leq rt$. 

Write $d=pr+q$, where $0 \leq q < r$, i.e., $p$ (resp. $q$) is the quotient (resp. the remainder) of $d$ divided by $r$. 
Note that we have $1 \leq p \leq t$ and $q=0$ if $p=t$ by our assumption. 

Let $G=W(K_{\underbrace{r,\ldots,r}_p,q})$ if $q \neq 0$ and let $G=W(K_{\underbrace{r,\ldots,r}_p})$ if $q=0$. 
Note that $|V(K_{\underbrace{r,\ldots,r}_p,q})|=pr+q=d$. 
Thus, it follows from Lemma~\ref{lem:whisker} that $\Delta(G)$ is Cohen--Macaulay of dimension $d-1$. 
Moreover, by Proposition~\ref{complete_multipartite}, we know that $$\reg(\Delta(G))=r \text{ and }\type(\Delta(H))=p+1.$$ 
If $p+1 < t$, by applying certain $S$-suspensions of $(t-p-1)$ times, we obtain the desired graph by Lemma~\ref{lem:S-suspension}. 
\end{proof}

\bigskip


\begin{thebibliography}{99}
%\bibitem{BigHer} M. Bigdeli and J. Herzog, Betti diagrams with special shape, {\em Homological and computational methods in commutative algebra}, 33--52, Springer INdAM Ser., {\bf 20}, Springer, Cham, 2017. 
\bibitem{BW} A. Bj\"{o}rner and M. Wachs, Shellable nonpure complexes and posets I, {\em Trans. Amer. Math. Soc.} {\bf 348} (1996), no. 4, 1299--1327.  
%\bibitem{B} A. Br{\o}ndsted, ``An Introduction to Convex Polytopes'', GTM. Springer, 1983. 
\bibitem{BH} W. Bruns and J. Herzog, ``Cohen-Macaulay rings, revised edition'', Cambridge University Press, 1998. 
\bibitem{CN} D. Cook II and U. Nagel, Cohen--Macaulay graphs and face vectors of flag complexes, {\em SIAM J. Discrete Math.} {\bf 26} (2012), 89--101. 
\bibitem{CRT} M. Crupi, G. Rinaldo and N. Terai, Cohen--Macaulay edge ideal whose height is half of the number of vertices, {\em Nagoya Math. J.} {\bf 201} (2011), 117--131. 
\bibitem{DS} H. Dao and J. Schweig, The type defect of a simplicial complex, {\em J. Combin. Theory Ser. A} {\bf 163} (2019), 195--210.
%\bibitem{DeLoera} J. De Loera and S. Klee, Transportation problems and simplicial polytopes that are not weakly vertex-decomposable, {\em Math. Oper. Res.} {\bf 37} (2012), no. 4, 670--674. 
\bibitem{EG} D. Eisenbud and S. Goto, Linear free resolutions and minimal multiplicity, {\em J. Algebra} {\bf 88} (1984), no. 1, 89--133. 
\bibitem{GV} I. Gitler and C. Valencia, Bounds for invariants of edge-rings, {\em Comm. Algebra} {\bf 33} (2005), no. 5, 1603--1616. 
\bibitem{GW} S. Goto and K.-i. Watanabe, On graded rings I, {\em J. Math Soc. Japan} {\bf 30} (1978), 179--213. 
\bibitem{HerzogHibi} J. Herzog and T. Hibi, ``Monomial Ideals'', GTM, Springer, 2010. 
\bibitem{HHKO} T. Hibi, A. Higashitani, K. Kimura and A. O'Keefe, Algebraic study on Cameron-Walker graphs, {\em J. Algebra} {\bf 422} (2015), 257--269.
\bibitem{HKKMV} T. Hibi, H. Kanno, K. Kimura, K. Matsuda and A. Van Tuyl, Homological invariants of Cameron--Walker graphs, arXiv:2007.14176. 
\bibitem{HKM} T. Hibi, H. Kanno and K. Matsuda, Induced matching numbers of finite graphs and edge ideals, {\em J. Algebra} {\bf 532} (2019), 311--322. 
\bibitem{HMV} T. Hibi, K. Matsuda and A. Van Tuyl, Regularity and $h$-polynomials of edge ideals, {\em Electron. J. Combin.} {\bf 26} (2019), no. 1, Paper No. 1.22, 11 pp. 
\bibitem{KM} F. Khosh-Ahang and S. Moradi, Regularity and projective dimension of the edge ideal of $C_5$-free vertex decomposable graphs, {\em Proc. Amer. Math. Soc.} {\bf 142} (2014), no. 5, 1567--1576. 
\bibitem{MM} N. Matsuoka and S. Murai, Uniformly Cohen-Macaulay simplicial complexes and almost Gorenstein${}^{\ast}$ simplicial complexes, {\em J. Algebra} {\bf 455} (2016), 14--31. 
\bibitem{V} W. V. Vasconcelos, ``Computational Methods in Commutative Algebra and Algebraic Geometry", Springer-Verlag, 1998. 
\end{thebibliography}
\end{document}